\documentclass{amsart}
\usepackage{graphicx}
\usepackage{amsmath,amssymb}

\newtheorem{theorem}{Theorem}[section]

\newtheorem{assertion}[theorem]{Assertion}
\newtheorem{corollary}[theorem]{Corollary}

\theoremstyle{definition}

\theoremstyle{remark}
\newtheorem{remark}[theorem]{Remark}

\numberwithin{equation}{section}

\begin{document}

\title{Power convergence of Abel averages}%
\author{Yuri Kozitsky}%
\address{Institute of Mathematics, Maria Curie-Sklodowska University, 20-031 Lublin, Poland}%
\email{jkozi@hektor.umcs.lublin.pl}%
\author{David Shoikhet}
\address{Department of Mathematics, ORT Braude College, P.O. Box 78, 21982 Karmiel,
Israel}
\email{davs@braude.ac.il}
\author{Jaroslav Zem\'anek}
\address{Institute of Mathematics, Polish Academy of Sciences, P.O.Box 21, 00-956 Warsaw,
Poland} 
\email{zemanek@impan.pl}
\subjclass{47A10;  47A35; 47D06}%
\keywords{Abel average, Ces{\`a}ro average, ergodic theorem,
resolvent, Riesz decomposition, spectral mapping theorem}%

\begin{abstract}
Necessary and sufficient conditions are presented for the Abel
averages of discrete and strongly continuous semigroups, $T^k$ and
$T_t$, to be power convergent in the operator norm in a complex
Banach space. These results cover also the case where $T$ is
unbounded and the corresponding Abel average is defined by means of
the resolvent of $T$. They complement the classical results by
Michael Lin establishing sufficient conditions for the corresponding
convergence for a bounded $T$.

\end{abstract}
\maketitle

\section{Posing the problem}

For a bounded linear operator $T$ on a Banach space $X$, the Abel
average of the discrete semigroup $\{T^k\}_{k\in \mathbb{N}_0}$ is
defined as
\begin{equation}
  \label{e1.1}
  A_\alpha = (1 -\alpha)
\sum_{ k=0}^\infty \alpha^k T^k = (1 - \alpha)[I -\alpha ®T]^{-1},
\end{equation}
where $\alpha$ is a suitable numerical parameter, i.e., such that
$A_\alpha$ belongs to $\mathcal{L}(X)$ -- the Banach algebra of all
bounded linear operators\footnote{Actually, the operators $A_\alpha$
have a natural geometric origin, even in a more general nonlinear
setting, see \cite[page 154]{18} and \cite{19}.} on $X$.

Likewise, for a strongly continuous semigroup $\{T_t\}_{t\geq0}$,
the Abel average is defined by the formula
\begin{equation}
  \label{e1.2}
\tilde{A}_\lambda = \lambda \int_0^\infty e^{-\lambda s} T_s ds,
\end{equation}
with a suitable parameter $\lambda$, which is to be understood
point-wise, as an improper Riemann integral; see, e.g., \cite[page
42]{4}.

In this note, we establish necessary and sufficient conditions which
ensure that the averages (\ref{e1.1}) and (\ref{e1.2}) are power
convergent in the operator norm. Our main result (Theorem \ref{t2.1}
below) covers also the case where $T$ in (1.1) is unbounded.

The study of the Abel averages goes back to at least E. Hille
\cite{6} and W.F. Eberlein \cite{3}. They are presented in the books
\cite{4,7,10,17}. Uniform ergodic theorems for Abel and Ces{\`a}ro
averages were established by M. Lin in \cite{12} and \cite{13}. The
following assertions can be deduced from the corresponding nowadays
classical results of \cite{13}.
\begin{assertion}
  \label{a1.1}
Let $T$ be such that
\begin{equation}
  \label{e1.3}
\|T^n /n\| \to 0, \qquad { as} \qquad n\to \infty.
\end{equation}
Then, for each $\alpha \in (0,1)$, the operator $A_\alpha$ in
(\ref{e1.1}) belongs to $\mathcal{L}(X)$, and the following
statements are equivalent: \vskip.2cm
\begin{tabular}{ll}
(i) \ &$(I-T)X$ is closed;\\[.2cm]
(ii) \ &the net $\{A_\alpha\}_{\alpha \in (0,1)}$ converges in
$\mathcal{L}(X)$, as $\alpha\to 1^{-}$;\\[.2cm]
(iii) \ &the Ces{\`a}ro averages $N^{-1} \sum_{n=0}^{N-1} T^n$
converge, as $N\to \infty$.
\end{tabular}
\vskip.2cm \noindent The (operator-norm) limit in (ii) and (iii) is
the same -- the projection $E$ of $X$ onto ${\rm Ker} (I-T)$ along
${\rm Im} (I-T)$, that is, the Riesz projection corresponding to the
(at most) simple pole $1$ of the resolvent of $T$.
\end{assertion}
\begin{assertion}
  \label{a1.2}
 Let $\{T_t\}_{t\geq 0}$ be a strongly continuous semigroup of bounded linear
operators such that
\begin{equation}
  \label{e1.4}
\|T_t /t\| \to 0, \qquad { as} \qquad t\to + \infty,
\end{equation}
and let $B$ be its generator. Then, for all $\lambda > 0$, the
operator $\tilde{ A}_\lambda$ in (\ref{e1.2}) is in $\mathcal{L}(X)$
and the following statements are equivalent: \vskip.2cm
\begin{tabular}{ll}
(i) \ &$B$ has closed range;\\[.2cm]
(ii) \ &the net $\{\tilde{A}_\lambda\}_{\lambda >0}$ converges in
$\mathcal{L}(X)$, as $\lambda\to 0^{+}$;\\[.2cm]
(iii) \ &for each $\lambda>0$, the operator $\tilde{A}_\lambda$ is
uniformly ergodic, that is,\\ &the sequence of its Ces{\`a}ro
averages $N^{-1} \sum_{n=0}^{N-1} \tilde{A}_\lambda^n$ converges\\ &in
$\mathcal{L}(X)$.
\end{tabular}
\vskip.2cm \noindent The limits in (ii) and (iii) coincide; their
common value is the projection  $\tilde{E}$ of $X$ onto ${\rm Ker}B$
along ${\rm Im}B$, given by the Riesz decomposition\footnote{See,
e.g., \cite[Theorem 18.8.1, pages 521--522]{7} and \cite[Theorems
5.8-A and 5.8-D, pages 306--311]{21}.}
\begin{equation}
  \label{e1.5}
X = {\rm Ker}B \oplus {\rm Im}B,
\end{equation}
corresponding to the (at most) simple pole $0$ of the resolvent of
$B$.
\end{assertion}
Note that
\[
{\rm Ker} B = \bigcap_{t\geq 0} {\rm Ker}(I-T_t),
\]
where the inclusion $``\subset"$ follows by, e.g., \cite[Theorem
1.8.3, page 33]{17}.

In the discrete case, an analog of claim (iii) of Assertion
\ref{a1.2} can also be obtained. As follows from (\ref{e1.3}), the
spectrum of $T$ is contained in the closure of the open unit disk
$\varDelta$. By the spectral mapping theorem, the spectrum of
$A_\alpha$ is then contained in $\varDelta \cup \{1\}$. Since
\[
{\rm Ker} (I-A_\alpha) = {\rm Ker}(I-T) \qquad {\rm and} \qquad {\rm
Im} (I-A_\alpha) = {\rm Im}(I-T),
\]
cf. the proof of Theorem \ref{t2.1} below, we have the Riesz
decomposition
\[
{\rm Ker} (I-A_\alpha) \oplus {\rm Im} (I-A_\alpha) = {\rm Ker}
(I-T) \oplus {\rm Im} (I-T) =X,
\]
and thus the point $1$ is at most a simple pole of $A_\alpha$. In
particular, it is at most an isolated point of the spectrum of
$A_\alpha$. Hence, $\|A_\alpha^n/n\|\to 0$, as $n \to  +\infty$;
see, e.g., \cite{15}. Therefore, all the operators $A_\alpha$,
$\alpha \in(0, 1)$, are uniformly ergodic, even power convergent to
the same limit $E$ as above. This complements Assertion \ref{a1.1}
in the spirit of Assertion \ref{a1.2}.

As we shall see in Assertions \ref{a1.3} and \ref{a1.4} below, both
claims (ii) above are equivalent to the power convergence of the
corresponding Abel averages; see also Remark \ref{r2.2} below.
Indeed, under the conditions of Assertions \ref{a1.1} and
\ref{a1.2}, by the technique used in \cite{13} one can show that,
for $\alpha$ close to $1^{-}$ and $\lambda$ close to
$0^+$, the operators $A_\alpha$ and $\tilde{A}_\lambda$,
respectively, are power convergent in $\mathcal{L}(X)$. As we shall
see later, if $X$ is a complex Banach space, the assumptions of
Assertions \ref{a1.1} and \ref{a1.2} allow one to prove the
corresponding power convergence of the operators $A_\alpha$ and
$\tilde{A}_\lambda$, for all $\alpha\in (0,1)$ and all $\lambda >
0$, respectively. More precisely, the following extensions of
Assertions \ref{a1.1} and \ref{a1.2} hold. See also \cite{14}.
\begin{assertion}
  \label{a1.3}
Let $T$ be a bounded linear operator in a complex Banach space $X$
obeying (\ref{e1.3}), and let $A_\alpha$, $\alpha \in (0,1)$, be its
Abel average (\ref{e1.1}). Then the following statements are
equivalent: \vskip.2cm
\begin{tabular}{ll}
(i)  &$(I-T)X$ is closed;\\[.2cm]
(ii)  &for some $\alpha\in (0,1)$,  the sequence
$\{A^n_\alpha\}_{n\in \mathbb{N}}$ converges in
$\mathcal{L}(X)$;\\[.2cm]
(iii)  &for each $\alpha\in (0,1)$,  the sequence
$\{A^n_\alpha\}_{n\in \mathbb{N}}$ converges in $\mathcal{L}(X)$.
\end{tabular}
\vskip.2cm \noindent The limits in (ii) and (iii) coincide with the
projection $E$ from Assertion \ref{a1.1}.
\end{assertion}
\begin{assertion}
  \label{a1.4}
Let $\{T_t\}_{t\geq 0}\subset \mathcal{L}(X)$ be a strongly
continuous semigroup of bounded linear operators in a complex Banach
space $X$ such that (\ref{e1.4}) holds. Let $B$ be its generator and
$\tilde{A}_\lambda$, $\lambda >0$, be its Abel average (\ref{e1.2}).
Then the following statements are equivalent: \vskip.2cm
\begin{tabular}{ll}
(i) \ &$B$ has closed range;\\[.2cm]
(ii) \ &for some $\lambda >0$, the sequence
$\{\tilde{A}^n_\lambda\}_{n\in \mathbb{N}}$ converges in
$\mathcal{L}(X)$;\\[.2cm]
(iii) \ &for each $\lambda>0$, the sequence
$\{\tilde{A}^n_\lambda\}_{n\in \mathbb{N}}$ converges in
$\mathcal{L}(X)$.
\end{tabular}
\vskip.2cm \noindent The limits in (ii) and (iii) coincide with the
projection  $\tilde{E}$ from Assertion \ref{a1.2}.
\end{assertion}
In fact, the conditions (\ref{e1.3}) and (\ref{e1.4}) are quite far
from being necessary for the corresponding Abel averages to converge
as stated above. For example, the former one can be replaced by the
dissipativity condition used in the classical Lumer--Phillips
theorem; see, e.g., \cite[page 250]{22}. The next assertion, which
provides an example of this sort, might be useful in the study of
the sets of fixed points of some nonlinear operators; see \cite{18}
and \cite{19}.
\begin{assertion}
\label{a1.5} For a complex Banach space $X$ and
$T\in\mathcal{L}(X)$, let $W(T)$ denote the numerical range of
$T$; see \cite[page 81]{BD} or \cite[page 12]{17}, and let
$\overline{W(T)}$ be its closure. Suppose that $T$ is such that
${\rm Re}W(T)\subset (-\infty ,1]$. Then, for each $\alpha \in (0, 1)$, the Abel
averages (\ref{e1.1}) of $T$ obey the estimate $\|A_\alpha\|\leq  1$
and statements (i), (ii), and (iii) of Assertion \ref{a1.3} are
equivalent. Furthermore, if ${\rm Re}\overline{W(T)} \subset (-\infty ,1)$, then $I-
T$ is invertible on $X$ and $\lim_{n\to +\infty}A^n_\alpha = 0$.
\end{assertion}
Also, the assumption (1.3) in Assertion \ref{a1.3} can be relaxed to
\begin{equation}
  \label{e1.6}
  \sup_{N} \bigg{\|}\frac{1}{N} \sum_{n=0}^{N-1} T^n \bigg{\|} <
  \infty,
\end{equation}
which, by \cite[Theorem 3.1]{16}, is equivalent to
\begin{equation}
  \label{e1.7}
\sup_{\alpha\in (0, 1)}  \sup_{N \in\mathbb{N}_0} \bigg{\|}(1-\alpha)\sum_{k=0}^N \alpha^k T^k \bigg{\|}  <
\infty.
\end{equation}
Indeed, by \cite{5}, condition (\ref{e1.6})
and the closedness of $(I-T)X$ yield the existence of
$\lim_{\alpha\to 1^{-}} A_\alpha$, which is equivalent to the fact
that the point $1$ is at most a simple pole of the resolvent of $T$;
see \cite[Theorem 18.8.1, pages 521--522]{7}. Hence, by the
Koliha--Li characterization of the power convergence \cite{8,9,11},
statements (i), (ii), and (iii) of Assertion \ref{a1.3} are again
equivalent, this time under the weaker assumption (\ref{e1.6}) in
place of (\ref{e1.3}).

In the light of the above facts, it would be interesting to find an
analogous characterization of the norm-boundedness in $t > 0$ of the
integral averages
\[
\frac{1}{t} \int_0^t T_s ds,
\]
assuming, e.g., the uniform boundedness of the partial integrals in
(\ref{e1.2}).

Of course, the uniform Abel boundedness (\ref{e1.7}) is by no means
necessary for the existence of $\lim_{\alpha\to 1^{-}} A_\alpha$.
Relevant matrix examples can easily be constructed by using
\cite[Theorem 8, page 378]{24}.

\section{The results}

In this section, we derive the conditions that are necessary and
sufficient for the statements of Assertions \ref{a1.3}, \ref{a1.4},
and \ref{a1.5} to hold. Moreover, our results cover also the case
where $T$ in (\ref{e1.1}) is unbounded, and hence (\ref{e1.3}) is
not applicable. The key observation which allowed us to get them is
that the principal thing one needs is the spectrum $\sigma(T)$ lying
merely in the half-plane
\[
\Pi = \{\zeta \in \mathbb{C}: {\rm Re}  \zeta \leq 1\}.
\]
Note also that (\ref{e1.3}) and statement (i) in Assertion
\ref{a1.3} imply that
\begin{equation}
  \label{e2.1}
{\rm Ker}(I-T)\oplus {\rm Im}(I-T) = X;
\end{equation}
see, e.g., \cite{15} and \cite[pages 40--43]{18}. In the sequel, for
a closed densely defined linear operator $T$ in a complex Banach
space $X$, by $\mathcal{D}(T)$ and $\rho(T)$ we denote the domain
and the resolvent set of $T$, respectively. For such an operator
with $(1,+\infty) \subset \rho(T)$, the Abel average can be defined
as the following bounded linear operator
\begin{equation}
  \label{e2.2}
  A_\alpha = (1-\alpha)[I-\alpha T]^{-1}, \qquad \alpha \in (0,1).
\end{equation}
Finally, by ${\rm Im}(I - T)$ we mean the $(I- T)$-image of
$\mathcal{D}(T)$.
\begin{theorem}
  \label{t2.1}
Let T be a densely defined closed linear operator in a complex
Banach space $X$ such that $(1,+\infty) \subset  \rho(T)$. Then the following statements are equivalent: \vskip.2cm
\begin{tabular}{ll}
(i) \quad & for each $\alpha \in (0,1)$, the sequence
$\{A_\alpha^n\}_{n\in \mathbb{N}}$ of powers of its\\ &Abel average (\ref{e2.2}) converges in $\mathcal{L}(X)$;\\[.2cm]
(ii) \quad &$\sigma (T) \subset \Pi$ and (\ref{e2.1}) holds.
\end{tabular}
\vskip.2cm \noindent For every $\alpha \in (0,1)$, the limit in (i)
is the projection of $X$ onto ${\rm Ker} (I-T)$ along ${\rm Im}
(I-T)$.
\end{theorem}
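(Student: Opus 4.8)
The plan is to exploit the fact that the spectral mapping relation
\[
\sigma(A_\alpha) = \left\{ \frac{1-\alpha}{1-\alpha\zeta} : \zeta \in \sigma(T) \right\} \cup \{\text{possibly } 0\}
\]
turns the half-plane condition $\sigma(T)\subset\Pi$ into a disk condition on $\sigma(A_\alpha)$. A direct computation shows that the Möbius transformation $\zeta \mapsto (1-\alpha)/(1-\alpha\zeta)$ maps the line $\{\mathrm{Re}\,\zeta = 1\}$ onto the circle of radius $1/2$ centered at $1/2$, and maps the open half-plane $\{\mathrm{Re}\,\zeta < 1\}$ onto the open disk of that same circle; the point at infinity goes to $0$. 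Hence $\sigma(T)\subset\Pi$ is equivalent to $\sigma(A_\alpha)$ lying in the closed disk $\overline{D(1/2,1/2)}$, which touches the unit circle only at the point $1$. This is the engine that should make the power convergence of $A_\alpha^n$ accessible via the Koliha--Li / Katznelson--Tzafriri-type characterization: a bounded operator $S$ is power convergent in norm iff $\sigma(S)\subset \varDelta\cup\{1\}$, the point $1$ (if in $\sigma(S)$) is a simple pole of the resolvent, and $\|S^n\|$ is bounded (the boundedness being automatic here since all of $\sigma(S)$ except $1$ is strictly inside $\varDelta$ and $1$ is a simple pole).

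Concretely, I would proceed as follows. First establish the algebraic identities announced in the text: ${\rm Ker}(I-A_\alpha) = {\rm Ker}(I-T)$ and ${\rm Im}(I-A_\alpha) = {\rm Im}(I-T)$. These follow from the factorization $I - A_\alpha = (I - \alpha T)^{-1}\big( (I-\alpha T) - (1-\alpha)I \big) = \alpha (I-\alpha T)^{-1}(I - T)$, valid on $\mathcal{D}(T)$, together with the boundedness and injectivity of $(I-\alpha T)^{-1}$ and the fact that it maps $X$ onto $\mathcal{D}(T)$. Consequently the Riesz decomposition $X = {\rm Ker}(I-A_\alpha)\oplus{\rm Im}(I-A_\alpha)$ holds if and only if \eqref{e2.1} holds, and when it does, the point $1$ is at most a simple pole of the resolvent of $A_\alpha$ with Riesz projection equal to the projection $E$ onto ${\rm Ker}(I-T)$ along ${\rm Im}(I-T)$. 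Second, for the implication (ii) $\Rightarrow$ (i): given $\sigma(T)\subset\Pi$, deduce via the spectral mapping theorem for the resolvent (applied to the function $\zeta\mapsto(1-\alpha)/(1-\alpha\zeta)$, which is analytic on a neighborhood of $\sigma(T)$ in $\overline{\mathbb{C}}$ since $1/\alpha > 1$ lies in $\rho(T)$) that $\sigma(A_\alpha)\subset\overline{D(1/2,1/2)}$; combine with the simple-pole statement just proved and invoke the Koliha--Li characterization of norm power convergence to conclude $A_\alpha^n \to E$, for every $\alpha\in(0,1)$. Third, for (i) $\Rightarrow$ (ii): if $A_\alpha^n$ converges in norm for each $\alpha$, then by the same characterization $\sigma(A_\alpha)\subset\varDelta\cup\{1\}$ with $1$ a simple pole, which forces $X = {\rm Ker}(I-A_\alpha)\oplus{\rm Im}(I-A_\alpha)$, hence \eqref{e2.1}; and applying the inverse Möbius map $w\mapsto (1-w/(1-\alpha)\,\cdot\,$ wait --- rather, solving $w = (1-\alpha)/(1-\alpha\zeta)$ for $\zeta$ gives $\zeta = (1 - (1-\alpha)/w)/\alpha$, which sends $\varDelta\cup\{1\}$ back into $\Pi$; combined with the spectral mapping theorem (now in the direction expressing $\sigma(T)$ in terms of $\sigma(A_\alpha)$) this yields $\sigma(T)\subset\Pi$. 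One should be slightly careful that the hypothesis (i) is assumed for \emph{each} $\alpha$, which is exactly what is needed to recover all of $\sigma(T)$ via different Möbius maps, though in fact a single $\alpha$ already suffices for the spectral inclusion.

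The main obstacle I anticipate is handling the unbounded case cleanly: the spectral mapping theorem must be used in the form appropriate to resolvents of unbounded operators (e.g. the version for functions analytic at infinity, sending $\infty\in\sigma(T)\cup\{\infty\}$ to $0\in\sigma(A_\alpha)$), and one must verify that $A_\alpha = (1-\alpha)(I-\alpha T)^{-1}$ is genuinely bounded and that $0$'s membership in $\sigma(A_\alpha)$ (which happens precisely when $T$ is unbounded) causes no trouble --- it does not, since $0\in\varDelta$. A secondary technical point is confirming that the Koliha--Li hypotheses are met: specifically that $\sup_n\|A_\alpha^n\| < \infty$, which follows because $\sigma(A_\alpha)\cap\partial\varDelta\subseteq\{1\}$ and $1$ is a simple pole, so the Laurent expansion of the resolvent at $1$ has no higher-order part and a standard estimate bounds the powers. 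The remaining steps --- the Möbius computation, the kernel/image identities, the identification of the limit with $E$ --- are routine once the framework is set up.
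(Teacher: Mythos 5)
Your overall strategy is the paper's own: establish ${\rm Ker}(I-A_\alpha)={\rm Ker}(I-T)$ and ${\rm Im}(I-A_\alpha)={\rm Im}(I-T)$ (your factorization $I-A_\alpha=\alpha(I-\alpha T)^{-1}(I-T)$ is a clean repackaging of the paper's element-wise computation with the identities (\ref{e2.3})), then transport the spectrum under $f_\alpha(\zeta)=(1-\alpha)/(1-\alpha\zeta)$ and invoke the Koliha--Li characterization. The direction (ii) $\Rightarrow$ (i) is sound. The genuine flaw is in your argument that (i) forces $\sigma(T)\subset\Pi$: the inverse map $g_\alpha(w)=\bigl(1-(1-\alpha)/w\bigr)/\alpha$ does \emph{not} send $\varDelta\cup\{1\}$ into $\Pi$. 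Since $f_\alpha$ maps $\Omega_\alpha=\{\zeta:|\alpha\zeta-1|>1-\alpha\}$ onto $\varDelta$, the set $g_\alpha(\varDelta)$ is $\Omega_\alpha$, the exterior of the closed disk of radius $\alpha^{-1}-1$ centred at $\alpha^{-1}$; that disk is tangent to the line ${\rm Re}\,\zeta=1$ at $\zeta=1$ and lies to its right, so its exterior contains many points with ${\rm Re}\,\zeta>1$ (for instance $\zeta=2\in\Omega_\alpha$ for every $\alpha>2/3$). Consequently a single $\alpha$ yields only $\sigma(T)\subset\overline{\Omega}_\alpha$, which is strictly weaker than $\sigma(T)\subset\Pi$, and your parenthetical claim that ``a single $\alpha$ already suffices for the spectral inclusion'' is false.

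The repair is exactly what you gesture at in the first half of that same sentence, and is what the paper does: use hypothesis (i) for \emph{every} $\alpha\in(0,1)$ to get $\sigma(T)\subset\overline{\Omega}_\alpha$ for all $\alpha$, and then observe that $\bigcap_{\alpha\in(0,1)}\overline{\Omega}_\alpha=\Pi$, because the excluded disks, all tangent to the line ${\rm Re}\,\zeta=1$ at $1$ with radii tending to infinity as $\alpha\to 0^{+}$, sweep out the whole open half-plane $\{{\rm Re}\,\zeta>1\}$. With that correction the rest of your proposal --- the Möbius image of $\Pi$ being the closed disk $\{|\zeta-1/2|\le 1/2\}$, the simple-pole bookkeeping via the kernel/image identities, the Koliha--Li invocation, and the identification of the limit with the projection $E$ onto ${\rm Ker}(I-T)$ along ${\rm Im}(I-T)$ --- goes through and coincides with the published argument.
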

\begin{proof}
For $\lambda \in \rho(T)$, let $R(\lambda ,T)$ denote the resolvent
of $T$. Thus, we have
\begin{eqnarray*}
(a) \quad & &(\lambda I-T)R(\lambda, T) x = x, \qquad x\in X;\\[.2cm]
(b) \quad & &R(\lambda, T)(\lambda I-T) x = x, \qquad x\in
\mathcal{D}(T).
\end{eqnarray*}
For $\alpha \in (0,1)$, we have $1/\alpha \in\rho(T)$;
hence, $A_\alpha = (\alpha^{-1} -1)R(\alpha^{-1}, T)$,
which yields
\begin{eqnarray}
\label{e2.3}
(a) \quad & &( I-\alpha T)A_\alpha x = (1-\alpha)x, \qquad x\in X;\\[.2cm]
(b) \quad & &A_\alpha ( I- \alpha T) x = (1-\alpha) x, \qquad x\in
\mathcal{D}(T). \nonumber
\end{eqnarray}
Take now an $x\in {\rm Ker} (I-A_\alpha)$,  that is, $A_\alpha
x = x$. As ${\rm Im} A_\alpha$ lies in $\mathcal{D}(T)$, our $x$ is
in $\mathcal{D}(T)$, and by (a) in (\ref{e2.3}) we have that
$x-\alpha T x = x - \alpha x$. Thus, $x = T x$, and hence ${\rm
Ker}(I - A_\alpha) \subset {\rm Ker}(I - T)$. Conversely, choose $x
\in {\rm Ker}(I- T) \subset\mathcal{ D}(T)$. Then by (b) in
(\ref{e2.3}), we have that $A_\alpha(x - \alpha x) = (1-\alpha )x$.
Hence, $x = A_\alpha x$, and thereby
\begin{equation}
  \label{e2.4}
  {\rm Ker} (I-A_\alpha) = {\rm Ker}(I-T).
\end{equation}
Let now $x$ be in ${\rm Im}(I -T)$, that is, $x = y -Ty$ for some $y
\in \mathcal{D}(T)$. By (b) in (\ref{e2.3}), we then get
$\alpha (I-T) y = (I-A_\alpha) (I-\alpha T)y$,
which yields $x= (I-A_\alpha)z$ for
\[
z = \frac{1}{\alpha} (I-\alpha T) y.
\]
Therefore, ${\rm Im}(I -T) \subset {\rm Im}(I-A_\alpha)$.
Conversely, let $x \in {\rm Im}(I-A_\alpha)$, i.e., $x = y -
A_\alpha y$, for some $y\in X$. Note that $z = \alpha(1 -
\alpha)^{-1} A_\alpha y$ is in $\mathcal{D}(T)$. For this $z$, by
(a) in (\ref{e2.3}) we have
\begin{eqnarray*}
  (I-T)z  =  \frac{\alpha}{1-\alpha} (I-T) A_\alpha y
   =  y - A_\alpha y = (I - A_\alpha )y = x,
\end{eqnarray*}
which finally yields
\begin{equation}
  \label{e2.5}
{\rm Im}(I- A_\alpha ) = {\rm Im} (I-T).
\end{equation}
Let us stress that both (\ref{e2.4}) and (\ref{e2.5}) hold for any
$\alpha\in (0, 1)$. Moreover, the subspaces in (\ref{e2.4}) and
(\ref{e2.5}) are closed whenever $A_\alpha$ is power convergent.

For an $\alpha \in (0, 1)$, consider the following univalent
analytic function
\begin{equation}
  \label{e2.6}
 f_\alpha (\zeta) = \frac{1-\alpha}{1 - \alpha \zeta}, \qquad \zeta
 \in \mathbb{C}\setminus \{ \alpha^{-1}\}.
\end{equation}
It maps the domain
$\Omega_\alpha = \{ \zeta \in \mathbb{C}: |\alpha \zeta - 1| > 1 -
\alpha\}$
onto the open unit disk $\varDelta \subset \mathbb{C}$, and
$f_\alpha(1) = 1$. Obviously, $A_\alpha = f_\alpha(T)$, and
$\sigma(A_\alpha)$ lies in the closure of $\varDelta$ (actually, it
lies in $\varDelta\cup \{1\}$ by the Koliha--Li characterization of
the power convergence). Thus, by the spectral mapping theorem (see,
e.g., \cite[Theorem 5.71-A, page 302]{21}) and our assumption
$(1,+\infty)\subset  \rho(T)$, we obtain that $\sigma(T)$ lies in
$\overline{\Omega}_\alpha$ -- the closure of $\Omega_\alpha$. Therefore,
\[
\sigma (T) \subset \bigcap_{\alpha \in (0,1)} \overline{\Omega}_\alpha = \Pi.
\]
Moreover, (\ref{e2.4}) and (\ref{e2.5}) yield (\ref{e2.1}), by the Koliha-Li
characterization of the power convergence \cite{8,9,11}. Thus, $(i) \Rightarrow  (ii)$.

For each $\alpha\in (0, 1)$, the
homographic transformation (\ref{e2.6}) maps $\Pi$ onto the closed
disk $\{\zeta\in \mathbb{C}: |\zeta - 1/2|\leq 1/2$; see, e.g.,
\cite[page 84]{20}. This yields $\sigma(A_\alpha) \subset \varDelta
\cup\{1\}$. Since ${\rm Ker}(I- A_\alpha) = {\rm Ker}(I - T)$ and
${\rm Im}(I- A_\alpha) = {\rm Im}(I - T)$, it follows by
(\ref{e2.1}) that, for each $\alpha \in (0, 1)$, the powers
$A^n_\alpha$ converge to the projection $E$ of $X$ onto ${\rm Ker}(I
- T)$ along ${\rm Im}(I - T)$, where $E$ is as in Assertion
\ref{a1.1}.
\end{proof}
\begin{remark}
  \label{r2.2}
Condition (\ref{e2.1}) in (ii) of Theorem \ref{t2.1} can be replaced
by the existence of $\lim_{\alpha \to 1^{-}}A_\alpha$. In view of
(\ref{e2.4}) and (\ref{e2.5}), the latter limit is equal to the
Riesz projection $E$ of $X$ onto ${\rm Ker}(I- T)$ along ${\rm
Im}(I- T)$, given by the decomposition in (\ref{e2.1}). The point
$1$ is simultaneously at most a simple pole of the resolvents of
both $T$ and $A_\alpha$.
\end{remark}

The theorem just proven obviously extends Assertion \ref{a1.3}.
Since the closure of the numerical range of a bounded linear
operator contains its spectrum (see, e.g., \cite[Theorem 10.1, page
88]{BD}, \cite[Theorem 1.3.9, page 12]{17}, or \cite[Proposition,
page 217]{23}), Theorem \ref{t2.1} is also a generalization of
Assertion \ref{a1.5}. In the same spirit, we obtain the following
extension of Assertion \ref{a1.4}.
\begin{theorem}
  \label{t2.3}
Let $\{T_t\}_{t\geq 0}$ be a strongly continuous semigroup in a
complex Banach space $X$. Let $B$ be its generator and $\tilde{
A}_\lambda$ be its Abel average (\ref{e1.2}). Additionally, assume
that $\rho(B)$ contains the positive real axis. Then the following
statements are equivalent: \vskip.2cm
\begin{tabular}{ll}
(i) \quad &$\rho(B)$ contains the whole open right half-plane and\\
&${\rm Ker} B \oplus {\rm Im}B = X$;\\[.2cm]
(ii) \quad &for some $\lambda >0$, the sequence $\{
\tilde{A}_\lambda^n\}_{n\in \mathbb{N}}$ converges in
$\mathcal{L}(X)$\\
& and $\rho(B)$ contains the whole open right half-plane;\\[.2cm]
(iii) \quad &for each $\lambda >0$, the sequence $\{
\tilde{A}_\lambda^n\}_{n\in \mathbb{N}}$ converges in
$\mathcal{L}(X)$.
\end{tabular}
\vskip.2cm \noindent For each $\lambda>0$, the limit above is the
projection of $X$ onto ${\rm Ker} B$ along ${\rm Im}B$.
\end{theorem}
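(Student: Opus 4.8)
The plan is to reduce Theorem \ref{t2.3} to Theorem \ref{t2.1} by exploiting the resolvent identity that expresses the Abel average $\tilde A_\lambda$ of a strongly continuous semigroup in terms of the resolvent of its generator, namely $\tilde A_\lambda = \lambda R(\lambda, B)$ for $\lambda$ in the positive real axis (this is the Laplace-transform formula for the resolvent, valid since we have assumed $(0,+\infty)\subset\rho(B)$ and the integral in (\ref{e1.2}) converges pointwise). Writing $\lambda R(\lambda,B) = (1-\alpha)[I-\alpha T]^{-1}$ with $\alpha = 1/(1+\lambda)$ and $T = I + B$ suggests that the natural move is to introduce the auxiliary operator $T := I + B$, which is densely defined and closed because $B$ is, and to check that $\mu\in\rho(T)$ if and only if $\mu - 1 \in \rho(B)$. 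First I would verify the algebraic identity $\tilde A_\lambda = A_\alpha$ for this $T$ with $\alpha = (1+\lambda)^{-1}\in(0,1)$, and observe that as $\lambda$ ranges over $(0,+\infty)$, $\alpha$ ranges over all of $(0,1)$, so ``for some $\lambda>0$'' corresponds to ``for some $\alpha\in(0,1)$'' and likewise for ``for each''.

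Having made this translation, the next step is to match the hypotheses and conclusions. The standing assumption that $\rho(B)$ contains the positive real axis becomes exactly $(1,+\infty)\subset\rho(T)$, which is the hypothesis of Theorem \ref{t2.1}. The half-plane $\Pi = \{\zeta : \mathrm{Re}\,\zeta \le 1\}$ for $T$ corresponds, under the shift $\zeta \mapsto \zeta - 1$, to the closed left half-plane for $B$; hence $\sigma(T)\subset\Pi$ is equivalent to $\sigma(B)$ lying in the closed left half-plane, i.e. to $\rho(B)$ containing the open right half-plane. Moreover $\mathrm{Ker}(I - T) = \mathrm{Ker}\,B$ and $\mathrm{Im}(I-T) = \mathrm{Im}\,B$, so condition (\ref{e2.1}) for $T$ is precisely $\mathrm{Ker}\,B \oplus \mathrm{Im}\,B = X$. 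Consequently statement (ii) of Theorem \ref{t2.1} for $T$ is word-for-word statement (i) of Theorem \ref{t2.3} for $B$, and statement (i) of Theorem \ref{t2.1} is statement (iii) of Theorem \ref{t2.3}. Finally, the projection of $X$ onto $\mathrm{Ker}(I-T)$ along $\mathrm{Im}(I-T)$ identified in Theorem \ref{t2.1} is the projection of $X$ onto $\mathrm{Ker}\,B$ along $\mathrm{Im}\,B$, which gives the last sentence of the theorem.

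It remains to bring in the middle statement (ii) of Theorem \ref{t2.3} and close the cycle of implications. The implications $(\mathrm{i})\Rightarrow(\mathrm{iii})$ and $(\mathrm{iii})\Rightarrow$ (the convergence part of (ii)) come directly from Theorem \ref{t2.1} applied to $T = I+B$; and $(\mathrm{iii})\Rightarrow(\mathrm{ii})$ also needs ``$\rho(B)$ contains the open right half-plane'', which is furnished by the $(\mathrm{i})\Leftrightarrow(\mathrm{ii})$ equivalence in Theorem \ref{t2.1} together with the spectral correspondence above. The only genuinely non-formal implication is $(\mathrm{ii})\Rightarrow(\mathrm{i})$: from convergence of $\tilde A_\lambda^n$ for a single $\lambda$ one gets, via Theorem \ref{t2.1}(i)$\Rightarrow$(ii), that $\mathrm{Ker}(I-T)\oplus\mathrm{Im}(I-T)=X$, hence $\mathrm{Ker}\,B\oplus\mathrm{Im}\,B=X$; the open-right-half-plane condition is assumed in (ii) itself, so (i) follows. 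Thus no new analysis is needed beyond Theorem \ref{t2.1}.

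The main obstacle I anticipate is purely a bookkeeping one rather than a conceptual one: one must be careful that the Laplace formula $\tilde A_\lambda = \lambda R(\lambda,B)$ genuinely holds under the stated hypotheses (strong continuity plus $(0,\infty)\subset\rho(B)$ suffice for the improper Riemann integral in (\ref{e1.2}) to exist and to equal $\lambda R(\lambda,B)$ on all of $X$, cf. \cite[page 42]{4}), and that the substitution $T = I + B$ really does preserve ``densely defined and closed'' and transports domains, kernels, ranges and resolvent sets exactly as claimed — all of which are standard but should be stated explicitly. Once that dictionary is in place, Theorem \ref{t2.3} is an immediate corollary of Theorem \ref{t2.1}.
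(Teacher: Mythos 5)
Your proof is correct and takes essentially the same route as the paper: the paper likewise sets $T=I+B$ and $\alpha=(1+\lambda)^{-1}$, verifies the identity $\tilde{A}_\lambda = A_\alpha$, and then invokes Theorem \ref{t2.1}. You merely spell out the dictionary between the hypotheses and the handling of statement (ii) more explicitly than the paper, which says only that the result ``follows directly from Theorem \ref{t2.1}.''
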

\begin{proof}
For $\lambda >0$, we have $(1+\lambda)^{-1} =: \alpha \in (0,1)$.
Set $T= I+B$. Then
\begin{eqnarray*}
\tilde{A}_\lambda & = & \lambda (\lambda I -B)^{-1} =
\frac{1-\alpha}{\alpha} \left(\frac{1-\alpha}{\alpha} I - (T-I)
\right)^{-1}\\[.2cm] & = & (1-\alpha)[(1-\alpha) I - \alpha
(T-I)]^{-1} \\[.2cm]
& = & (1-\alpha) [I-\alpha T]^{-1} = A_\alpha.
\end{eqnarray*}
Now the proof follows directly from Theorem \ref{t2.1}.
\end{proof}

With the help of \cite[Theorem VIII.1.11, page 622]{2} we get the
following generalization of Assertion \ref{a1.4}. Recall that the
Abel average $\tilde{ A}_\lambda$ was defined in (\ref{e1.2}) and
its $n$-th power can be written as (see, e.g., \cite[page 43]{4})
\begin{equation}
  \label{e2.7}
\tilde{A}_\lambda^n = \lambda^n [R(\lambda, A)]^n =
\frac{\lambda^n}{(n-1)!} \int_0^\infty e^{-\lambda t} t^{n-1} T_t
dt, \quad n=1, 2, \dots .
\end{equation}
As mentioned just after Assertion \ref{a1.2}, we have
\[
{\rm Ker} B = \bigcap_{t\geq 0} \{x\in X: T_t x = x\}.
\]
\begin{corollary}
  \label{c2.4}
Let $\{T_t\}_{t\geq 0}$ be a strongly continuous semigroup in a
complex Banach space $X$. Let $B$ be its generator and $\tilde{
A}_\lambda$ be its Abel average (\ref{e1.2}). Assume also that
\begin{equation}
  \label{e2.8}
\lim_{t\to +\infty} \frac{\log \|T_t\|}{t} = 0.
\end{equation}
Then the following statements are equivalent: \vskip.2cm
\begin{tabular}{ll}
(i) \quad &${\rm Ker} B \oplus {\rm Im} B = X$;\\[.2cm]
(ii) \quad &for some $\lambda >0$,  the sequence
$\{\tilde{A}_\lambda^n \}_{n\in \mathbb{N}}$ converges in
$\mathcal{L}(X)$;\\[.2cm]
(iii) \quad &for each $\lambda >0$, the sequence
$\{\tilde{A}_\lambda^n \}_{n\in \mathbb{N}}$ converges in
$\mathcal{L}(X)$.
\end{tabular}
\vskip.2cm \noindent For each $\lambda>0$, the limit in (ii) and
(iii) is the projection of $X$ onto ${\rm Ker} B$ along ${\rm Im}
B$.
\end{corollary}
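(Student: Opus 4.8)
The goal is to reduce Corollary \ref{c2.4} to Theorem \ref{t2.3}, which in turn rests on Theorem \ref{t2.1}. The only thing Theorem \ref{t2.3} assumes beyond the bare semigroup data is that $\rho(B)$ contains the positive real axis; so the plan is to show that the growth condition (\ref{e2.8}) forces this, and then to show that under (\ref{e2.8}) the extra spectral hypothesis appearing in (i) and (ii) of Theorem \ref{t2.3} (namely that $\rho(B)$ contains the whole open right half-plane) is automatic, so that the three statements of Theorem \ref{t2.3} collapse exactly to the three statements of the corollary.

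First I would invoke \cite[Theorem VIII.1.11, page 622]{2}: condition (\ref{e2.8}) says that $\omega_0$, the growth bound of the semigroup, equals $0$, and that cited result identifies this with the spectral bound $s(B) = \sup\{\mathrm{Re}\,\zeta : \zeta \in \sigma(B)\}$; more precisely it gives $s(B) = \omega_0 = 0$ here (the semigroup analogue of the fact that, for a single bounded operator, the spectral radius coincides with the limit of $\|T^n\|^{1/n}$). Consequently $\sigma(B) \subset \{\zeta : \mathrm{Re}\,\zeta \le 0\}$, so the entire open right half-plane lies in $\rho(B)$, and in particular so does the positive real axis. This simultaneously verifies the standing hypothesis of Theorem \ref{t2.3} and discharges the clause ``$\rho(B)$ contains the whole open right half-plane'' that decorates statements (i) and (ii) there. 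Therefore statements (i), (ii), (iii) of Theorem \ref{t2.3} become, respectively, word for word, statements (i), (ii), (iii) of the corollary, and the equivalence follows; the identification of the limit as the projection of $X$ onto $\mathrm{Ker}\,B$ along $\mathrm{Im}\,B$ is carried over verbatim. The remark about $\mathrm{Ker}\,B = \bigcap_{t \ge 0}\{x : T_t x = x\}$ quoted just before the corollary is what lets one read this projection back in terms of the common fixed points of the semigroup, but is not needed for the equivalence itself.

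The one genuine subtlety, and the place I would be most careful, is the precise content of the cited theorem from \cite{2}: it is \emph{not} true for an arbitrary $C_0$-semigroup on a Banach space that $s(B) = \omega_0$ (the Hille--Phillips counterexamples), so one must check that the hypothesis of that theorem — typically eventual norm continuity of $t \mapsto T_t$, or some such regularity that \cite{2} imposes — is either available here or genuinely needed; if the cited result is instead the unconditional identity $\omega_0 = \lim_{t\to\infty} t^{-1}\log\|T_t\|$ together with the one-sided inequality $s(B) \le \omega_0$, then only $s(B) \le 0$ is used, which is all that the argument above actually requires, and no regularity is needed. I would state the deduction in that weaker, safe form: from (\ref{e2.8}) and \cite[Theorem VIII.1.11]{2} one gets $\sigma(B) \subset \{\mathrm{Re}\,\zeta \le 0\}$, hence $\{\mathrm{Re}\,\zeta > 0\} \subset \rho(B)$, and then Theorem \ref{t2.3} applies with its half-plane clauses automatically satisfied. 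Everything else is a direct citation of Theorem \ref{t2.3}; there is no new computation, and (\ref{e2.7}) is recalled only for the reader's orientation, to exhibit $\tilde A_\lambda^n$ as an explicit integral average of the semigroup.
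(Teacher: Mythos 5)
Your proposal is correct and follows exactly the route the paper intends: the paper gives no written proof of Corollary \ref{c2.4} beyond the sentence citing \cite[Theorem VIII.1.11, page 622]{2}, and the intended argument is precisely yours — condition (\ref{e2.8}) gives growth bound $0$, hence $\sigma(B)\subset\{\mathrm{Re}\,\zeta\le 0\}$, so the open right half-plane lies in $\rho(B)$ and the extra clauses in (i) and (ii) of Theorem \ref{t2.3} become automatic. Your caution about using only the one-sided inequality $s(B)\le\omega_0$ (rather than the generally false equality) is exactly the right reading of the cited result and is consistent with the paper's Remark \ref{r2.5}.
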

\begin{remark}
  \label{r2.5}
In Theorem \ref{t2.3} and Corollary \ref{c2.4}, the convergence in
claims (ii) and (iii) is based either on the condition $(0,+\infty)
\subset \rho(B)$ or on (\ref{e2.8}). Both are weaker than
(\ref{e1.4}) used in Assertion \ref{a1.4}; see, e.g., \cite[Theorem
VIII.1.11, page 622]{2} or \cite[Corollary II.1.3, page 44]{4}.
Like in Remark
\ref{r2.2}, either (ii) or (iii) can be replaced by the existence
of $\lim_{\lambda \to 0^+} \tilde{A}_\lambda$, due to \cite[Theorem 18.8.1, pages 521--522]{7}.
\end{remark}

\section{An example}

We present an unbounded linear operator $T$, which has the
properties described by Theorem \ref{t2.1}. Here $X$ is the complex
Hilbert space $L^2(\mathbb{R})$.
\begin{equation}
  \label{e3.1}
T_0 = D^2 + (2 - t^2), \qquad D= \frac{d}{dt}, \qquad
\mathcal{D}(T_0)=S(\mathbb{R}),
\end{equation}
where
$S(\mathbb{R})$ is the space of Schwartz test functions.
Then $T_0$ is essentially self-adjoint and such that
\begin{equation}
  \label{e3.2}
T_0 x_n = \lambda_n x_n, \qquad \lambda_n = 1 - 2 n, \quad n\in
\mathbb{N}_0.
\end{equation}
The eigenvalues $\lambda_n$ are simple and the eigenvectors
\begin{equation}
  \label{e3.3}
x_n (t) = h_n (t) \exp(- t^2/2), \qquad t\in \mathbb{R},
\end{equation}
constitute an orthonormal basis of $X$; see, e.g., \cite[pages
36--39]{1}. In (\ref{e3.3}), for $n\in \mathbb{N}_0$, $h_n$ is the
 Hermite polynomial of degree $n$.
In particular, $h_0 = \pi^{1/4}$. Let $T$ be the closure of
(\ref{e3.1}). Then $X_0:= {\rm Ker} (I-T)$ is the one-dimensional
subspace of $X$ spanned by $x_0$. Let $X_1$ be the orthogonal
complement of $X_0$, i.e.,
\begin{equation}
  \label{e3.4}
 X = X_0 \oplus X_1.
\end{equation}
Take any $x\in X_1$. Then
\begin{equation}
  \label{e3.5}
x= \sum_{n=1}^\infty \alpha_n x_n,
\end{equation}
and hence $x = (I- T)y$ for
\[
y = \sum_{n=1}^\infty \frac{\alpha_n}{2n} x_n.
\]
This immediately yields that $X_1 = {\rm Im}(I- T)$, and hence
\[
X = {\rm Ker}(I-T) \oplus {\rm Im}(I-T),
\]
by (\ref{e3.4}). For the resolvent of $T$, we have
\[
R(\lambda, T)x_0 = \frac{1}{\lambda -1}x_0, \qquad R(\lambda, T)x_n
= \frac{1}{\lambda - \lambda_n}x_n, \quad n\in \mathbb{N}.
\]
Thus, in view of (\ref{e3.2}), $R(\lambda, T)$ is a compact operator,
positive for $\lambda > 1$. Then its spectral
decomposition is
\begin{equation}
  \label{e3.6}
R(\lambda, T) = \sum_{n=0}^\infty \frac{1}{\lambda - \lambda_n} P_n,
\end{equation}
where $P_n$, $n\in \mathbb{N}_0$, is the orthogonal projection onto
the subspace spanned by $x_n$. For $\lambda > 1$ and any $x\in X$,
cf. (\ref{e3.5}), we have
\begin{eqnarray*}
\|(\lambda -1)R(\lambda, T)x - P_0 x \| & = & (\lambda -1) \left[
\sum_{n=1}^\infty \frac{|\alpha_n|^2}{(\lambda - 1
+2n)^2}\right]^{1/2}\\[.2cm]
& \leq & (\lambda -1) \|x\|,
\end{eqnarray*}
which yields that, in $\mathcal{L}(X)$, $(\lambda -1) R(\lambda, T) \to P_0$ as $\lambda
\to 1^{+}$.
For $m\in \mathbb{N}$, by (\ref{e3.6}) we have
\[
[(\lambda-1) R(\lambda,T)]^m = \sum_{n=0}^\infty \left(\frac{\lambda
-1}{ \lambda -1 + 2n} \right)^m P_n.
\]
Then, for $m \geq 4$,
\begin{eqnarray*}
\|[(\lambda -1) R(\lambda, T)]^m - P_0 \| & = &
\bigg{\|}\sum_{n=1}^\infty \left( \frac{\lambda -1}{\lambda - 1 +
2n}\right)^m P_n\bigg{\|}\qquad \\[.2cm]
& \leq & \left(\frac{\lambda-1}{\lambda +1}
\right)^{m-2}\sum_{n=1}^\infty \left( \frac{\lambda -1}{\lambda - 1
+ 2n}\right)^2 \\[.2cm]
& = & \left(\frac{\lambda-1}{\lambda +1} \right)^{m-2} C(\lambda)
\to 0, \quad {\rm as} \quad m\to +\infty.
\end{eqnarray*}

\subsection*{Acknowledgment}
This work was supported by the DFG through SFB 701 ``Spektrale
Strukturen und Topologische Methoden in der Mathematik" and through
the research project 436 POL 113/125/0-1, and also by the European
Commission Project TODEQ (MTKD-CT-2005-030042).

\end{document}